\documentclass[11pt]{article}

\usepackage[a4paper,margin=1in]{geometry}
\usepackage{amsmath,amssymb,amsfonts,amsthm,mathtools}
\usepackage{bm}
\usepackage{booktabs}
\usepackage{graphicx}
\usepackage{microtype}
\usepackage{xcolor}
\usepackage{hyperref}

\hypersetup{
  colorlinks=true,
  linkcolor=blue!55!black,
  citecolor=blue!55!black,
  urlcolor=blue!55!black
}

\newtheorem{theorem}{Theorem}[section]
\newtheorem{proposition}[theorem]{Proposition}

\newcommand{\R}{\mathbb R}
\newcommand{\Ftwo}{\mathbb F_2}
\newcommand{\Sone}{\mathbb S^1}
\newcommand{\ii}{\mathrm i}
\newcommand{\dd}{\mathrm d}
\newcommand{\dist}{\operatorname{dist}}
\newcommand{\reach}{\operatorname{reach}}
\newcommand{\Fill}{\operatorname{Fill}}
\DeclareMathOperator*{\argmax}{arg\,max}

\title{Dual-Filtration Topology Recovery from Qualitative Acoustic Scattering Indicators}

\author{
Xiaomei Yang\thanks{School of Mathematics, Southwest Jiaotong University.}
\and
Jiaying Jia\thanks{School of Mathematics, Southwest Jiaotong University.}
\and
Zhiliang Deng\thanks{Corresponding author. School of Mathematical Sciences, University of Electronic Science and Technology of China.}
}

\date{}

\begin{document}
\maketitle

\begin{abstract}
Qualitative inverse scattering methods produce gray-scale indicator fields, from which the topology of the scatterer must be inferred without a prescribed threshold. We propose a dual-filtration method that estimates the component and cavity structure directly from the indicator. Superlevel sets are used to recover the exterior components, while sublevel persistent homology identifies interior cavities associated with the reconstructed envelope. The two types of information are combined into a binary reconstruction, allowing different topological features to be resolved at different intensity levels. We establish geometric and stability results that characterize when the topology can be reliably detected. Full-wave Lippmann--Schwinger experiments with factorization-method indicators show accurate recovery for well-resolved configurations and clear resolution transitions as component separation and cavity size vary.
\end{abstract}

\noindent\textbf{Keywords:}
inverse acoustic scattering; factorization method; persistent homology;
sublevel and superlevel filtrations; Betti numbers; topological resolution.

\section{Introduction}
\label{sec:introduction}

Inverse acoustic scattering seeks to recover geometric or material information
about an unknown scatterer from measurements of the waves generated by incident
fields. Among the available reconstruction techniques, qualitative and
sampling-type methods are attractive because they provide direct imaging
indicators without an explicit parametrization of the unknown boundary.
Classical examples include the linear sampling method
\cite{ColtonKirsch1996,ColtonKress2019} and the factorization method
\cite{Kirsch1998,KirschGrinberg2008}. Related developments include direct
sampling methods based on far-field data \cite{LiZou2013} and extended sampling
methods for more restricted measurement configurations \cite{LiuSun2018}.
Although these methods differ in construction, their numerical output is
typically a scalar indicator whose large values provide evidence for the
presence of the scatterer.

Such indicators are gray-scale rather than binary. A threshold is therefore
needed if one wishes to extract a candidate domain and then describe its
connected components or cavities. Once a threshold has been fixed, ordinary
homology can compute the topology of the corresponding level set. The main
difficulty is the choice of the level itself. The topology may change
substantially with the threshold, and a level that is suitable for one
indicator need not be appropriate for another. Moreover, exterior components
and interior cavities may be represented most clearly at different intensity
levels. A single fixed threshold can therefore be restrictive for scatterers
with several components or nontrivial interior topology.

Persistent homology provides a natural framework for describing such changes
over a family of level sets. Since the introduction of topological persistence
\cite{Edelsbrunner2002} and its algebraic formulation \cite{Zomorodian2005},
it has become a standard tool for extracting multiscale topological
information from data; see also \cite{EdelsbrunnerHarer2010,DeyWang2022}.
Its stability with respect to perturbations of the underlying scalar field
\cite{Cohen2007} is particularly relevant when topology is inferred from
noisy data. Applications of persistent homology to inverse problems remain
comparatively limited. In our earlier work \cite{Deng2026}, for example,
persistent homology was used to construct a topology-sensitive Bayesian prior
for an elliptic inverse problem. Here it plays a different role: topological
information is extracted directly from a qualitative scattering indicator.

Persistent homology has also been used for gray-scale image segmentation and
threshold selection. Chung and Day \cite{ChungDay2018} developed a
persistent-homology approach that estimates segmentation thresholds together
with Betti numbers. The relation between opposite directions of a filtration
has a broader topological basis: extended persistence combines sublevel and
relative-superlevel information \cite{CohenSteiner2009}, while duality
results for digital images relate persistent homology associated with
complementary image constructions \cite{Bleile2022}. Thus,
persistent-homology-based thresholding and the use of complementary
filtrations are established ideas on which the present construction builds.

The additional difficulty in inverse scattering is that the indicator is not
a direct image of the object. It is produced from wave measurements through
the forward model, the observation configuration, the frequency, the noise
level, and the imaging method. Consequently, the topology visible in the
indicator need not coincide with the physical topology of the scatterer. A
sufficiently small cavity may be filled in the indicator, while two nearby
components may appear as a single connected region. Postprocessing cannot
reliably recover topological information that is no longer represented in the
image. We therefore distinguish the physical topology from the
\emph{detectable topology}, by which we mean the component and cavity structure
stably supported by the available indicator.

This distinction motivates the dual-filtration reconstruction proposed here.
The superlevel filtration is used to identify the exterior component structure
of the bright scattering region. Its constant-topology states are ranked by an
integrated boundary-energy criterion, and the selected state provides a filled
exterior envelope. The sublevel filtration is then used to identify persistent
low-intensity components whose birth locations lie inside this envelope. The
component and cavity counts are therefore inferred from the indicator rather
than prescribed in advance, and individual cavities may be reconstructed at
levels different from the level used for the exterior boundary.

To clarify when this topology can be resolved, we also introduce an ideal
distance-profile model. In that model, superlevel sets are geometric offsets
of the scatterer, so positive reach provides a topology-preserving offset
scale. Simple disk and annular geometries then show explicitly how nearby
components merge and small cavities disappear when that scale is exceeded.
For a circular annulus, the persistence of the interior sublevel component can
be expressed directly in terms of the hole radius; persistence stability then
gives a perturbation-dependent sufficient condition for retaining the cavity
bar. The model is not an exact representation of a factorization indicator.
Its purpose is to isolate a geometric resolution mechanism that can be tested
with full-wave data.

The numerical experiments use factorization-method indicators generated from
the full Lippmann--Schwinger model. Representative mixed and multi-hole
configurations test automatic topology recovery without a prescribed Betti
vector. Small-hole and close-component configurations illustrate the
distinction between physical and detectable topology, and parameter sweeps in
component separation and cavity radius reveal the corresponding resolution
transitions.

The remainder of the paper is organized as follows.
Section~\ref{sec:method} introduces the scattering model and the
dual-filtration reconstruction. Section~\ref{sec:analysis} gives the
mathematical interpretation and resolution analysis.
Section~\ref{sec:numerics} presents the full-wave numerical experiments, and
Section~\ref{sec:conclusion} concludes the paper.

\section{Scattering model and dual-filtration reconstruction}
\label{sec:method}

This section defines the full-wave scattering data and the factorization
indicator used in the computations, and then describes the two stages of the
topological reconstruction. The first stage extracts the exterior component
structure from superlevel sets; the second identifies cavities from the
sublevel persistence of the same normalized indicator.

\subsection{Full-wave data and factorization indicator}

Let $D\subset\R^2$ be a bounded penetrable scatterer with contrast
\begin{equation*}
q(x)=q_0\chi_D(x),
\qquad q_0>0,
\end{equation*}
where $\chi_D$ is the characteristic function of $D$. For an incident
direction $d\in\Sone$, the incident plane wave is
\begin{equation*}
u^i(x,d)=\exp(\ii kx\cdot d),
\end{equation*}
where $k>0$ is the wave number. Writing $u=u^i+u^s$ for the total field, with
$u^s$ the scattered field, we have
\begin{equation*}
\Delta u+k^2(1+q(x))u=0
\qquad \text{in }\R^2,
\end{equation*}
and $u^s$ satisfies the Sommerfeld radiation condition at infinity.

With the outgoing free-space Green's function
\begin{equation*}
G_k(x,y)=\frac{\ii}{4}H_0^{(1)}(k|x-y|),
\end{equation*}
the scattering problem is equivalently written as the Lippmann--Schwinger
equation
\begin{equation}
u(x,d)
=
u^i(x,d)
+k^2\int_DG_k(x,y)q(y)u(y,d)\,\dd y,
\qquad x\in\R^2.
\label{eq:LS}
\end{equation}

For an observation direction $\hat x\in\Sone$, the corresponding far-field
pattern is
\begin{equation*}
u^\infty(\hat x,d)
=C_k\int_D\exp(-\ii k\hat x\cdot y)q(y)u(y,d)\,\dd y,
\qquad
C_k=\frac{\exp(\ii\pi/4)k^2}{\sqrt{8\pi k}},
\end{equation*}
under the normalization used here. It defines the far-field operator
$F:L^2(\Sone)\to L^2(\Sone)$ by
\begin{equation*}
(Fg)(\hat x)
=\int_{\Sone}u^\infty(\hat x,d)g(d)\,\dd s(d).
\end{equation*}
Thus $F$ maps a superposition of incident plane waves to the corresponding
far-field pattern and is the data operator used below.

For the factorization indicator, set
\begin{align*}
\Re F&=\frac{F+F^*}{2},
&
\Im F&=\frac{F-F^*}{2\ii},
&
F_\#&=|\Re F|+|\Im F|,
\end{align*}
where $F^*$ is the adjoint of $F$ and $|A|=(A^2)^{1/2}$ for a self-adjoint
operator $A$. This is the standard positive self-adjoint operator used in the
factorization framework; see \cite{Kirsch1998,KirschGrinberg2008,KirschLiu2014}.
Let
\begin{equation*}
F_\#\psi_n=\lambda_n\psi_n,
\qquad \lambda_n\ge0,
\end{equation*}
with $\{\psi_n\}$ orthonormal in $L^2(\Sone)$. For a sampling point
$z\in\Omega_{\rm inv}$, where $\Omega_{\rm inv}\subset\R^2$ is a bounded
imaging window, define
\begin{equation*}
\phi_z(\hat x)=\exp(-\ii k\hat x\cdot z).
\end{equation*}
The regularized factorization-method indicator is
\begin{equation}
I_{\rm FM}(z)
=
\left(
\sum_n
\frac{|\langle\phi_z,\psi_n\rangle_{L^2(\Sone)}|^2}
{\lambda_n+\varepsilon_{\rm FM}}
\right)^{-1},
\qquad z\in\Omega_{\rm inv},
\label{eq:FM}
\end{equation}
where $\varepsilon_{\rm FM}>0$ controls the contribution of very small
eigenvalues. Large values of $I_{\rm FM}$ indicate stronger evidence for the
presence of the scatterer. We use the normalized field
\begin{equation*}
I(z)
=
\frac{I_{\rm FM}(z)-\min_{\Omega_{\rm inv}}I_{\rm FM}}
{\max_{\Omega_{\rm inv}}I_{\rm FM}-\min_{\Omega_{\rm inv}}I_{\rm FM}},
\qquad
I:\Omega_{\rm inv}\to[0,1],
\end{equation*}
for all subsequent topological processing.

In the computations, the incident and observation directions are sampled as
$\{d_j\}_{j=1}^M$ and $\{\hat x_i\}_{i=1}^M$, and the far-field data are
represented by the matrix
\begin{equation*}
F_{ij}=u^\infty(\hat x_i,d_j),
\qquad i,j=1,\ldots,M.
\end{equation*}
For uniformly distributed directions, the common quadrature factor can be
omitted because it only rescales the indicator before normalization. To model
measurement noise, we use
\begin{equation}
F^\delta_{ij}
=
F_{ij}
+
\delta\frac{\max_{p,q}|F_{pq}|}{\sqrt2}
\left(\xi_{ij}^{(1)}+\ii\xi_{ij}^{(2)}\right),
\label{eq:noise}
\end{equation}
where all $\xi_{ij}^{(1)}$ and $\xi_{ij}^{(2)}$ are independent
$\mathcal N(0,1)$ random variables and $\delta\ge0$ is the prescribed relative
noise level. For each noisy realization, the factorization operator,
indicator, and normalization are recomputed from $F^\delta$.

\subsection{Superlevel reconstruction of the exterior structure}
\label{subsec:superlevel}

Let
\begin{equation*}
S_t=\{x\in\Omega_{\rm inv}:I(x)\ge t\},
\qquad 0\le t\le1,
\end{equation*}
be the superlevel sets of the normalized indicator. The imaging window is
chosen large enough that the superlevel sets relevant to the reconstruction do
not meet $\partial\Omega_{\rm inv}$.

For a planar set $A$, let $H_q(A;\Ftwo)$ denote its $q$th homology group with
coefficients in $\Ftwo$, and define
\begin{equation*}
\beta_q(A)=\operatorname{rank}H_q(A;\Ftwo),
\qquad q=0,1.
\end{equation*}
Here $\beta_0$ counts connected components and $\beta_1$ counts holes. We use
\begin{equation*}
\bm\beta(A)=\bigl(\beta_0(A),\beta_1(A)\bigr),
\qquad
\bm\beta^+(t)=\bm\beta(S_t).
\end{equation*}

The interval $[0,1]$ is sampled on an ordered threshold grid. Consecutive
thresholds with the same Betti vector are grouped into maximal blocks. If
$P_j^+=[a_j,b_j]$ denotes the interval spanned by the $j$th block, then
\begin{equation*}
\bm\beta^+(t)=\bm\beta_j^+
\qquad
\text{at all sampled levels }t\in P_j^+.
\end{equation*}
We refer to $P_j^+$ as a \emph{superlevel topological state}.

The width of a state records how long a topology persists as the threshold
varies, but it does not measure how strongly the corresponding boundary is
represented by the indicator. We therefore rank the states using the gradient
carried by their level-set boundaries. For a regular level $t$, set
\begin{align*}
B_I^+(t)
&=
\int_{\partial S_t}|\nabla I(x)|\,\dd\mathcal H^1(x),
\\
\mathcal E_I^+(P_j^+)
&=
\int_{a_j}^{b_j}B_I^+(t)\,\dd t,
\end{align*}
where $\mathcal H^1$ is the one-dimensional Hausdorff measure. The selected
state is
\begin{equation*}
j_+^*\in
\argmax_{1\le j\le J}\mathcal E_I^+(P_j^+).
\end{equation*}

The superlevel stage is used only to estimate the exterior component count, so
we retain
\begin{equation*}
\widehat\beta_0=\beta_0(S_t),
\qquad t\in P_{j_+^*}^+.
\end{equation*}
The full Betti vector is nevertheless used to form the states. In this way, a
temporary hole-creation or hole-filling event is not merged with a neighboring
state that has the same number of connected components. Cavity counting is
deferred to the sublevel stage.

Within the selected state, we choose the exterior boundary level and envelope
by
\begin{align*}
t_+^*
&\in
\argmax_{t\in P_{j_+^*}^+}B_I^+(t),
\\
E_*
&=
\Fill(S_{t_+^*}).
\end{align*}
Here $\Fill(A)$ is obtained by filling every connected component of
$\Omega_{\rm inv}\setminus A$ that does not meet
$\partial\Omega_{\rm inv}$. Thus $E_*$ preserves the selected exterior
components while temporarily suppressing their interior holes. It serves as
the spatial envelope for cavity detection.

\subsection{Sublevel cavity detection and final reconstruction}
\label{subsec:sublevel}

Cavities appear as low-intensity regions surrounded by the bright material
support. We therefore consider the sublevel filtration
\begin{equation*}
L_t=\{x\in\Omega_{\rm inv}:I(x)\le t\},
\qquad 0\le t\le1,
\end{equation*}
and compute its finite zero-dimensional persistence bars
\begin{equation*}
[b_r,d_r),
\qquad
p_r=d_r-b_r.
\end{equation*}
For each finite bar, let $x_r^{\rm b}\in\Omega_{\rm inv}$ denote the grid
point at which the corresponding connected component is born in the lower-star
filtration. Essential $H_0$ classes are not used as cavity candidates.

The envelope $E_*$ separates interior minima from the exterior background.
Given a persistence cutoff $\tau>0$, define the cavity index set
\begin{equation}
\mathcal C_\tau(E_*)
=
\left\{
r:
 x_r^{\rm b}\in\operatorname{int}(E_*),
 \quad
 p_r\ge\tau
\right\}.
\label{eq:cavity-index-set}
\end{equation}
The detectable cavity count and Betti vector are then
\begin{equation*}
\widehat\beta_1=\#\mathcal C_\tau(E_*),
\qquad
\widehat{\bm\beta}_{\rm det}
=(\widehat\beta_0,\widehat\beta_1).
\end{equation*}
The birth-location condition in \eqref{eq:cavity-index-set} prevents persistent
low-intensity components originating in the exterior background from being
interpreted as cavities.

Persistence determines which cavities are retained; their geometry is
reconstructed separately. For $r\in\mathcal C_\tau(E_*)$ and
$b_r<t<d_r$, let $C_r(t)$ be the connected component of $L_t\cap E_*$ that
contains $x_r^{\rm b}$. We define the mean boundary-gradient score
\begin{equation*}
G_r(t)
=
\frac{1}{\mathcal H^1(\partial C_r(t))}
\int_{\partial C_r(t)}|\nabla I(x)|\,\dd\mathcal H^1(x)
\end{equation*}
when the boundary has positive length, and choose
\begin{equation*}
t_r^-
\in
\argmax_{b_r<t<d_r}G_r(t),
\qquad
\widehat C_r=C_r(t_r^-).
\end{equation*}
The final reconstruction is
\begin{equation*}
\widehat D
=
E_*
\setminus
\bigcup_{r\in\mathcal C_\tau(E_*)}\widehat C_r.
\end{equation*}
Thus the exterior envelope and the individual cavities are not forced to share
a single global threshold. The true Betti vector of $D$ is never supplied to
the algorithm.

\section{Mathematical interpretation and topological resolution}
\label{sec:analysis}

This section explains the mathematical structure behind the two stages of the
algorithm and then introduces an idealized model for its resolution limits.
Planar duality relates cavities to connected components of a complement, the
coarea formula interprets the superlevel state score, and a distance-profile
model makes the geometric scales of component merging and cavity filling
explicit.

\subsection{Planar duality and the superlevel energy}

Let $\widetilde H_q$ and $\widetilde H^q$ denote reduced homology and reduced
cohomology, respectively.

\begin{proposition}
\label{prop:alexander}
Let $K\subset\R^2$ be compact and locally contractible. Then
\begin{equation*}
\widetilde H_1(K;\Ftwo)
\cong
\widetilde H^0(S^2\setminus K;\Ftwo),
\end{equation*}
where $S^2$ is the one-point compactification of $\R^2$. Consequently,
\begin{equation*}
\beta_1(K)
=
N_{\rm bnd}(\R^2\setminus K),
\end{equation*}
where $N_{\rm bnd}$ denotes the number of bounded connected components of the
complement.
\end{proposition}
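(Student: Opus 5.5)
We will obtain the statement from Alexander duality on the sphere $S^2$, followed by a count of the complementary components. Regard $K\subset\R^2\subset S^2$ as a compact subset of $S^2$; it is proper, since the point at infinity is not in $K$. Alexander duality for a compact, locally contractible proper subset $K\subset S^n$ gives
\begin{equation*}
\widetilde H_q(K;\Ftwo)\cong\widetilde H^{\,n-q-1}(S^n\setminus K;\Ftwo).
\end{equation*}
The local contractibility hypothesis is exactly what lets us use singular (co)homology rather than \v{C}ech cohomology. Taking $n=2$ and $q=1$ gives the first isomorphism, $\widetilde H_1(K;\Ftwo)\cong\widetilde H^0(S^2\setminus K;\Ftwo)$. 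Since $\widetilde H_1=H_1$, the left side has rank $\beta_1(K)$.

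Next we compute the right-hand side. The set $U=S^2\setminus K$ is open in $S^2$, hence locally path-connected, so its components coincide with its path components. Therefore $H^0(U;\Ftwo)=\Ftwo^{\pi_0(U)}$, the functions constant on path components. Reduced cohomology removes the constants, so
\begin{equation*}
\operatorname{rank}\widetilde H^0(U;\Ftwo)=\#\pi_0(U)-1,
\end{equation*}
interpreted as an equality of cardinals if $U$ has infinitely many components. Finiteness is not actually needed: by duality it follows from $\beta_1(K)$ being finite, and in the paper's applications $K$ is a finite union of grid cells in any case.

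It remains to identify the components of $U$ with those of $\R^2\setminus K$. Because $K$ is compact, it lies in some closed disk $\overline{B_R}$. The set $(\R^2\setminus\overline{B_R})\cup\{\infty\}$ is connected and disjoint from $K$, so it lies in a single component $U_\infty$ of $U$. We then argue as follows.
\begin{itemize}
\item Removing the point $\infty$ from $U_\infty$ leaves a connected set, since removing a point from a connected open subset of a surface does not disconnect it. This set is exactly the unique unbounded component of $\R^2\setminus K$.
\item Every other component of $U$ avoids $\infty$ and is contained in $\overline{B_R}$, so it is a bounded component of $\R^2\setminus K$.
\item Conversely, every bounded component of $\R^2\setminus K$ misses $\R^2\setminus\overline{B_R}$ and is therefore a component of $U$ distinct from $U_\infty$.
\end{itemize}
Hence $\#\pi_0(U)-1=N_{\rm bnd}(\R^2\setminus K)$, which gives $\beta_1(K)=N_{\rm bnd}(\R^2\setminus K)$.

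The main obstacle is justifying Alexander duality in the singular form for the class of sets considered. For a general compact $K$ the correct statement uses \v{C}ech cohomology on the left, and the isomorphism with singular homology requires local contractibility (or that $K$ be an ENR/CW complex). I would cite the standard statement (e.g.\ Hatcher, Thm.~3.44) for compact, locally contractible $K\subset S^n$. I would also remark that the sets $S_t$ and $E_*$ arising from the grid are finite cubical complexes, so the hypothesis holds in every application. The component bookkeeping is routine by comparison.
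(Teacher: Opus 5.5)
Your proposal is correct and follows the paper's proof. Both use Alexander duality in $S^2$ at $q=1$, then observe that exactly one component of $S^2\setminus K$ contains $\infty$ while the others are the bounded components of $\R^2\setminus K$; you verify this last step more carefully than the paper does.

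Two small corrections, neither of which breaks the argument:
\begin{itemize}
\item Hatcher's Theorem~3.44 is stated the other way round, $\widetilde H_i(S^n\setminus K)\cong\widetilde H^{n-i-1}(K)$. To get your form you need a universal-coefficient dualization over $\Ftwo$. That step is valid because a compact locally contractible planar set has finitely generated homology.
\item That same finiteness means you do not need your ``equality of cardinals'' aside, and the aside is in fact false. For infinite $S$ the space $\Ftwo^{S}$ has dimension $2^{|S|}$, not $|S|$.
\end{itemize}
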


\begin{proof}
Alexander duality gives
\begin{equation*}
\widetilde H_q(K;\Ftwo)
\cong
\widetilde H^{1-q}(S^2\setminus K;\Ftwo).
\end{equation*}
For $q=1$, the dimension of the right-hand side is one less than the number of
connected components of $S^2\setminus K$. Exactly one component contains the
point at infinity; all remaining components are bounded complementary
components in $\R^2$.
\end{proof}

This fixed-level statement explains the spatial interpretation of the cavity
stage: holes of a bright planar set correspond to bounded components of its
dark complement. The proposed method does not merely compute the same
information at a single threshold. Instead, it follows low-intensity
components across their own filtration and uses the exterior envelope to
select those born inside the reconstructed object.

\begin{proposition}
\label{prop:coarea}
Let $I$ be Lipschitz on $\Omega_{\rm inv}$ and let $a<b$. Then
\begin{equation*}
\int_a^b
\int_{I^{-1}(t)}|\nabla I|\,\dd\mathcal H^1\,\dd t
=
\int_{\{a<I<b\}}|\nabla I(x)|^2\,\dd x.
\end{equation*}
If $P_j^+=[a_j,b_j]$ is a superlevel state whose level sets do not meet
$\partial\Omega_{\rm inv}$, then, for regular levels almost everywhere,
\begin{equation*}
\mathcal E_I^+(P_j^+)
=
\int_{\{a_j<I<b_j\}}|\nabla I(x)|^2\,\dd x.
\end{equation*}
\end{proposition}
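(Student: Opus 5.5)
The plan is to apply the Federer coarea formula for Lipschitz functions and then identify boundaries of superlevel sets with level sets at almost every level. For $I$ Lipschitz on the open set $\operatorname{int}\Omega_{\rm inv}\subset\R^2$ and any nonnegative measurable $g$, the coarea formula gives
\begin{equation*}
\int_{\Omega_{\rm inv}} g(x)\,|\nabla I(x)|\,\dd x
=
\int_{\R}\int_{I^{-1}(t)} g\,\dd\mathcal H^1\,\dd t .
\end{equation*}
By Rademacher's theorem, $\nabla I$ exists almost everywhere and is bounded. I would take $g=|\nabla I|\,\chi_{\{a<I<b\}}$, which is bounded, nonnegative and measurable.

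On the level set $I^{-1}(t)$ the indicator $\chi_{\{a<I<b\}}$ equals $1$ when $a<t<b$ and $0$ otherwise. The right-hand side therefore reduces to $\int_a^b\int_{I^{-1}(t)}|\nabla I|\,\dd\mathcal H^1\,\dd t$, and the left-hand side is $\int_{\{a<I<b\}}|\nabla I|^2\,\dd x$. This proves the first identity. One detail needs care: $\nabla I$ is defined only almost everywhere, so it is not obvious that $\int_{I^{-1}(t)}|\nabla I|\,\dd\mathcal H^1$ makes sense. The coarea formula applied to a null set $N$ with $g=\chi_N$ shows that $\mathcal H^1(N\cap I^{-1}(t))=0$ for almost every $t$. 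Hence the inner integral is well defined for a.e.\ $t$, independently of the representative chosen for $\nabla I$.

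For the second identity, it suffices to show that for almost every $t\in(a_j,b_j)$ we have $\int_{\partial S_t}|\nabla I|\,\dd\mathcal H^1=\int_{I^{-1}(t)}|\nabla I|\,\dd\mathcal H^1$, i.e.\ $B_I^+(t)$ coincides with the inner level-set integral. The first identity with $a=a_j$, $b=b_j$ then yields the formula for $\mathcal E_I^+(P_j^+)$. Continuity of $I$ gives $\partial S_t\subset I^{-1}(t)$, since the interior of $\Omega_{\rm inv}$ is assumed to contain the relevant level sets. The reverse inclusion can fail only at points of $I^{-1}(t)$ near which $I\ge t$, i.e.\ local minima of $I$ at value $t$ that lie in $\operatorname{int}S_t$.

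This reverse inclusion is the main obstacle, and I would handle it in two ways. First, the statement restricts to "regular levels", meaning levels at which $\nabla I\neq0$ on $I^{-1}(t)$. At such a point $I$ takes values below $t$ arbitrarily close by, so the point lies on $\partial S_t$ and the two sets coincide. Second, even without regularity, on the set $Z=I^{-1}(t)\setminus\partial S_t$ every point is a local minimum. At points of $Z$ where $I$ is differentiable, the gradient vanishes, so these points contribute nothing to the integrand. The remaining nondifferentiability points have $\mathcal H^1$-measure zero on a.e.\ level by the null-set argument above. Consequently the two integrals agree for a.e.\ $t$. Finally, the hypothesis that the level sets do not meet $\partial\Omega_{\rm inv}$ rules out boundary contributions from the window edge. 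Integrating over $t\in[a_j,b_j]$, and noting that the endpoints form a null set, completes the argument.
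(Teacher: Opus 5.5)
Your proposal is correct and follows the paper's proof: you apply the coarea formula to $|\nabla I|\chi_{\{a<I<b\}}$ and then use $I^{-1}(t)=\partial S_t$ at regular levels to identify $B_I^+(t)$ with the inner level-set integral. Your additions are sound refinements of the paper's one-line argument: you show the inner integral is well defined for a.e.\ $t$, and you note that points of $I^{-1}(t)\setminus\partial S_t$ are local minima with vanishing gradient, so the identification holds after integration for a.e.\ $t$ even without assuming regular levels.
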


\begin{proof}
Apply the coarea formula to
$|\nabla I|\bm 1_{\{a<I<b\}}$. For regular values of $t$,
$I^{-1}(t)=\partial S_t$, which gives the second identity.
\end{proof}

Thus $\mathcal E_I^+(P_j^+)$ is the Dirichlet energy carried by the indicator
through the intensity slab associated with one topological state. This gives a
geometric interpretation of the ranking criterion. It does not, however,
imply that the highest-energy state must coincide with the physical topology
for an arbitrary imaging operator.

\subsection{An ideal model for topological resolution}

For a closed set $D\subset\R^2$, let
\begin{equation*}
\rho_D(x)=\dist(x,D),
\qquad
D_r=\{x\in\R^2:\rho_D(x)\le r\}.
\end{equation*}
The reach of $D$, introduced by Federer \cite{Federer1959}, is
\begin{equation*}
\reach(D)
=
\sup\left\{
\rho>0:
\text{every }x\text{ with }\rho_D(x)<\rho
\text{ has a unique nearest point in }D
\right\}.
\end{equation*}

To model spatial blurring, consider
\begin{equation*}
I_0(x)=\Phi(\rho_D(x)),
\qquad x\in\Omega_{\rm inv},
\end{equation*}
where $\Phi$ is continuous and strictly decreasing on the relevant distance
range and satisfies $\Phi(0)=1$. If $r(t)=\Phi^{-1}(t)$, then
\begin{equation*}
\{I_0\ge t\}=D_{r(t)}\cap\Omega_{\rm inv}.
\end{equation*}
Thus, whenever the offset is contained in the imaging window, the superlevel
sets of the ideal indicator are precisely the geometric offsets of $D$.

\begin{theorem}
\label{thm:reach}
Assume $\reach(D)>0$. If $0\le r<\reach(D)$, then $D_r$ deformation
retracts onto $D$. Consequently,
\begin{equation*}
H_q(D_r;\Ftwo)\cong H_q(D;\Ftwo),
\qquad
\beta_q(D_r)=\beta_q(D),
\qquad q=0,1.
\end{equation*}
\end{theorem}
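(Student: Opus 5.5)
The plan is to use Federer's nearest-point projection. Since $\reach(D)>0$, every $x$ with $\rho_D(x)<\reach(D)$ has a unique nearest point $\pi_D(x)\in D$. Fix $r<\reach(D)$ and choose $r'$ with $r<r'<\reach(D)$. Then $D_r$ lies in the open neighbourhood $U=\{\rho_D<r'\}$, on which $\pi_D$ is defined.

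The first step is to establish that $\pi_D$ is continuous on $U$. I would invoke Federer's result \cite{Federer1959} that $\pi_D$ is Lipschitz on $\{\rho_D\le r\}$ for $r<\reach(D)$, with constant $\reach(D)/(\reach(D)-r)$. If a self-contained argument is wanted, continuity alone follows from compactness and uniqueness. Take $x_n\to x$ in $D_r$. The points $\pi_D(x_n)$ are bounded, so any subsequence has a further subsequence converging to some $p\in D$, because $D$ is closed. Passing to the limit in $|x_n-\pi_D(x_n)|=\rho_D(x_n)$ and using the continuity of $\rho_D$ gives $|x-p|=\rho_D(x)$. Uniqueness of the nearest point then forces $p=\pi_D(x)$, so the whole sequence converges to $\pi_D(x)$.

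The second step is to define the straight-line homotopy
\begin{equation*}
H(x,s)=(1-s)\,x+s\,\pi_D(x),\qquad x\in D_r,\ s\in[0,1].
\end{equation*}
This map is continuous because $\pi_D$ is. It satisfies $H(\cdot,0)=\mathrm{id}$, it maps into $D$ at $s=1$, and it fixes $D$ pointwise since $\pi_D|_D=\mathrm{id}$. It remains to check that $H$ stays inside $D_r$. The point $\pi_D(x)$ lies in $D$, and
\begin{equation*}
\rho_D\bigl(H(x,s)\bigr)\le |H(x,s)-\pi_D(x)|=(1-s)\,\rho_D(x)\le r.
\end{equation*}
Hence $H$ is a strong deformation retraction of $D_r$ onto $D$.

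The third step is formal. A deformation retraction is a homotopy equivalence, and homology is homotopy invariant, so $H_q(D_r;\Ftwo)\cong H_q(D;\Ftwo)$ for all $q$. Taking ranks for $q=0,1$ gives the stated equality of Betti numbers. The case $r=0$ is trivial because $D_0=D$, as $D$ is closed.

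The main obstacle is the continuity of $\pi_D$; the rest is routine. I would handle it by citing Federer's Lipschitz estimate. As a fallback, the subsequence argument above works, using that $D$ is closed and bounded sets in $\R^2$ are relatively compact.
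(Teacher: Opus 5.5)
Your proposal is correct and follows the paper's proof. Both use the straight-line homotopy $H(x,s)=(1-s)x+s\pi_D(x)$ to the nearest-point projection, keep it inside $D_r$ via $\dist(H(x,s),D)\le(1-s)\rho_D(x)\le r$, and conclude by homotopy invariance of homology. The only difference is that you justify the continuity of $\pi_D$ (through Federer's Lipschitz bound or your compactness-plus-uniqueness subsequence argument, both valid), whereas the paper simply asserts it.
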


\begin{proof}
For $r<\reach(D)$, the nearest-point projection
$\pi_D:D_r\to D$ is well defined and continuous. Define
\begin{equation*}
H(x,s)=(1-s)x+s\pi_D(x),
\qquad 0\le s\le1.
\end{equation*}
For $x\in D_r$,
\begin{equation*}
\dist(H(x,s),D)
\le |H(x,s)-\pi_D(x)|
=(1-s)|x-\pi_D(x)|
\le r,
\end{equation*}
so the homotopy remains inside $D_r$. It fixes $D$ pointwise and retracts
$D_r$ onto $D$.
\end{proof}

The theorem identifies a topology-preserving scale in the ideal model. Two
simple geometries make the corresponding failure mechanisms explicit. If $D$
is the union of two disjoint closed disks whose boundaries are separated by a
gap $g>0$, then
\begin{equation*}
\reach(D)=\frac{g}{2}.
\end{equation*}
The two offsets remain disjoint for $r<g/2$ and first touch at $r=g/2$.
Thus a small separation is naturally vulnerable to merging under an
offset-like indicator.

For the annulus
\begin{equation*}
D_{a,b}=\{x\in\R^2:a\le|x|\le b\},
\qquad 0<a<b,
\end{equation*}
one has $\reach(D_{a,b})=a$. For $0\le r<a$,
\begin{equation*}
(D_{a,b})_r=\{x:a-r\le|x|\le b+r\},
\end{equation*}
whereas the central cavity is filled when $r\ge a$. Hence the inner radius is
the exact offset scale at which the annular topology is lost.

These formulas are not used by the reconstruction algorithm. They provide a
reference model for interpreting the component-separation and cavity-size
experiments in Section~\ref{sec:numerics}.

\subsection{Cavity persistence and perturbation stability}

The ideal model also gives an explicit relation between cavity size and
sublevel persistence.

\begin{proposition}
\label{prop:annuluspersistence}
Let $D=D_{a,b}$ be a circular annulus and let
$I_0(x)=\Phi(\dist(x,D))$ as above. Assume that $\Omega_{\rm inv}$ contains
an exterior point whose distance from $D$ is greater than $a$. Then the cavity
component in the global sublevel filtration of $I_0$ gives a finite $H_0$ bar
with
\begin{equation*}
b_{\rm cav}=\Phi(a),
\qquad
d_{\rm cav}=1,
\qquad
p_{\rm cav}=1-\Phi(a).
\end{equation*}
In particular, $p_{\rm cav}$ is strictly increasing with the hole radius $a$.
\end{proposition}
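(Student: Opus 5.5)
The plan is to compute the sublevel sets of $I_0$ explicitly and follow the component born at the center of the hole. Since $\Phi$ is strictly decreasing, for $t<1$ we have
\begin{equation*}
L_t=\{x\in\Omega_{\rm inv}:\Phi(\rho_D(x))\le t\}=\{x\in\Omega_{\rm inv}:\rho_D(x)\ge\Phi^{-1}(t)\}.
\end{equation*}
On the annulus $D_{a,b}$ the distance function is explicit: $\rho_D(x)=a-|x|$ for $|x|\le a$, $\rho_D(x)=0$ on $D$, and $\rho_D(x)=|x|-b$ for $|x|\ge b$. The interior part of $L_t$ is therefore the closed disk $\{|x|\le a-\Phi^{-1}(t)\}$. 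This disk is nonempty exactly when $\Phi^{-1}(t)\le a$, that is, when $t\ge\Phi(a)$. The exterior part is $\{|x|\ge b+\Phi^{-1}(t)\}\cap\Omega_{\rm inv}$.

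\textbf{Birth.} The interior disk first appears at the level $\Phi(a)$, where it reduces to the single point $0$. This point is a local minimum of $I_0$, and the cavity component is born there with $b_{\rm cav}=\Phi(a)$.

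\textbf{No earlier merging.} For $\Phi(a)\le t<1=\Phi(0)$ we have $\Phi^{-1}(t)>0$. The interior disk lies in $\{|x|<a\}$ and the exterior piece lies in $\{|x|>b\}$, so they are separated by the annulus $D$, on which $I_0=1>t$. Hence the two pieces stay in different components for every $t<1$.

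\textbf{Death at level $1$.} At $t=1$ we get $L_1=\Omega_{\rm inv}$, which is connected (take the imaging window to be, e.g., a disk or square). The interior component therefore merges at level $1$.

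\textbf{The exterior component is older.} Under the standard elder rule, the interior component is the one that dies at the merge only if the exterior component was born earlier. The hypothesis supplies an exterior point $y$ with $\rho_D(y)>a$, so $I_0(y)=\Phi(\rho_D(y))<\Phi(a)$. The exterior background thus has a strictly smaller minimum than the cavity. Consequently the exterior carries the essential class, and the cavity gives the finite bar $[\Phi(a),1)$, with $d_{\rm cav}=1$ and $p_{\rm cav}=1-\Phi(a)$.

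\textbf{Monotonicity.} Since $\Phi$ is strictly decreasing, $1-\Phi(a)$ is strictly increasing in $a$.

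The main obstacle is to make sure the exterior region contributes no competing component that could alter the pairing. This requires two checks. First, the exterior part of $L_t$, namely the window minus a disk, must be connected for all relevant $t$; here I would assume the window is star-shaped about $0$, or argue that the radial structure keeps it connected. Second, all exterior minima must be older than $\Phi(a)$. If the exterior splits into several pieces, these can only merge among themselves at levels below $1$. This does not change the cavity's bar, because the cavity stays separated by $D$ until level $1$ and at that level at least one older class is present. The same argument handles possible ties among the exterior minima.
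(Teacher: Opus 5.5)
Your proposal is correct and follows the paper's argument. The cavity class is born at the center at level $\Phi(a)$. For every $t<1$ it is separated from the exterior by the annulus, where $I_0=1$. It merges at $t=1$. The hypothesis on an exterior point with $\dist(y,D)>a$ makes an exterior class strictly older, so the cavity gives the finite bar $[\Phi(a),1)$.

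Your explicit description of $L_t$ and the elder-rule bookkeeping make precise what the paper leaves implicit, including the connectedness of $\Omega_{\rm inv}$. The claim that exterior pieces "can only merge among themselves at levels below $1$" is not needed and need not hold for an oddly shaped window. The conclusion only uses that some older class takes part in the merge at $t=1$.
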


\begin{proof}
Inside the cavity, $\dist(x,D)$ is maximal at the center and equals $a$.
Since $\Phi$ is strictly decreasing, the cavity component is born at level
$\Phi(a)$. The assumption on the imaging window guarantees an exterior
component born at a strictly lower level, so the cavity class is not the
essential $H_0$ class. For every $t<1$, the material annulus, on which
$I_0=1$, separates the cavity from the exterior. The two components first
merge at $t=1$, giving the stated persistence.
\end{proof}

In particular,
\begin{equation*}
a\downarrow0
\qquad\Longrightarrow\qquad
p_{\rm cav}\downarrow0,
\end{equation*}
so a persistence cutoff acts as a cavity-resolution cutoff in this model.

For a scalar field $f$, let $\mathcal D_0(f)$ denote its zero-dimensional
sublevel persistence diagram. The bottleneck stability theorem
\cite{Cohen2007} states that
\begin{equation*}
d_B\bigl(\mathcal D_0(f),\mathcal D_0(g)\bigr)
\le\|f-g\|_\infty,
\end{equation*}
where $d_B$ denotes the bottleneck distance.

\begin{theorem}
\label{thm:barstability}
Suppose $\|I-I_0\|_\infty\le\varepsilon$. If a finite bar of
$\mathcal D_0(I_0)$ has persistence $p_0$ satisfying
\begin{equation*}
p_0>\tau+2\varepsilon,
\end{equation*}
then it is matched to a bar of $\mathcal D_0(I)$ whose persistence is greater
than $\tau$. Conversely, if a bar of $\mathcal D_0(I_0)$ has persistence
below $\tau-2\varepsilon$, its matched bar in $\mathcal D_0(I)$ remains below
$\tau$.
\end{theorem}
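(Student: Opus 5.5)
The plan is to invoke the bottleneck stability theorem of \cite{Cohen2007} quoted above and then track how persistence changes under a matching with small cost. Since $\|I-I_0\|_\infty\le\varepsilon$, we have $d_B(\mathcal D_0(I_0),\mathcal D_0(I))\le\varepsilon$. Hence, for every $\eta>0$, there is a partial matching $\mu$ between the two diagrams, with the diagonal allowed as a partner, whose cost is at most $\varepsilon+\eta$. Concretely, a bar $[b,d)$ matched to a bar $[b',d')$ satisfies $|b-b'|\le\varepsilon+\eta$ and $|d-d'|\le\varepsilon+\eta$. A bar sent to the diagonal has half-persistence at most $\varepsilon+\eta$, that is, $d-b\le2(\varepsilon+\eta)$. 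Since $\mathcal D_0$ of a function on a finite grid (lower-star filtration) has finitely many points, the infimum in the bottleneck distance is attained. We may therefore take $\eta=0$, or else keep $\eta$ and let it tend to zero at the end.

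The key elementary step is the triangle-inequality estimate
\begin{equation*}
|p'-p|=|(d'-b')-(d-b)|\le|d'-d|+|b'-b|\le2\varepsilon
\end{equation*}
for matched off-diagonal bars with persistences $p=d-b$ and $p'=d'-b'$. For the first claim, let $p_0>\tau+2\varepsilon$. Then $p_0>2\varepsilon$, so the bar cannot be matched to the diagonal. It is therefore matched to a genuine bar of $\mathcal D_0(I)$ with persistence $p'\ge p_0-2\varepsilon>\tau$. For the converse, let $p_0<\tau-2\varepsilon$. If the bar is matched to a genuine bar, then $p'\le p_0+2\varepsilon<\tau$. If it is matched to the diagonal, there is no matched bar, and the statement holds vacuously; equivalently, the diagonal partner has persistence $0<\tau$.

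The main point needing care is that finite and essential bars must not be mixed. The stability theorem, applied to the compact window $\Omega_{\rm inv}$ with both functions continuous (or defined on the same grid), matches essential classes with essential classes. This holds because both filtrations have the same number of essential $H_0$ classes, namely $\beta_0(\Omega_{\rm inv})$. In the standard convention, where an essential bar dies at $\max$ or at $+\infty$ and the cost is measured coordinatewise in $\ell^\infty$, a finite bar can only be matched to a finite bar or to the diagonal. I would state this convention explicitly, cite \cite{Cohen2007} for the attained matching, and note that the strict inequalities in the hypotheses absorb any $\eta$ if one prefers not to assume attainment.
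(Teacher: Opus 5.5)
Your proof is correct and follows the same route as the paper. Bottleneck stability gives a matching of cost at most $\varepsilon$, matched birth and death coordinates move by at most $\varepsilon$ so persistence changes by at most $2\varepsilon$, and $p_0>\tau+2\varepsilon\ge 2\varepsilon$ rules out a diagonal partner.

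Your extra care about attainment of the bottleneck infimum, the diagonal case in the converse, and finite versus essential classes adds rigor. One caveat: the finite/essential separation is automatic only when essential bars die at $+\infty$. If they are truncated at $\max$, a finite point can be matched to an essential one at finite cost. The separation is not needed anyway, since the statement only asks for a matched bar of $\mathcal D_0(I)$.
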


\begin{proof}
Under a bottleneck matching of cost at most $\varepsilon$, the birth and death
coordinates of a matched off-diagonal point each change by at most
$\varepsilon$. Its persistence therefore changes by at most $2\varepsilon$.
The first inequality also gives $p_0>2\varepsilon$, so the corresponding point
cannot be matched to the diagonal.
\end{proof}

For the annular ideal model, a sufficient condition for the cavity bar to
remain above the cutoff is therefore
\begin{equation*}
1-\Phi(a)>\tau+2\varepsilon.
\end{equation*}
This controls persistence magnitude. The additional birth-location condition
in \eqref{eq:cavity-index-set} is what allows the algorithm to interpret a
persistent bar as an interior cavity rather than as background structure.

The final binary reconstruction has the detected Betti vector whenever the
localized cavity regions are geometrically separated in the expected way.

\begin{proposition}
\label{prop:finaltopology}
Suppose $E_*$ has $m$ connected components and no bounded complementary
components. Let $\widehat C_1,\ldots,\widehat C_r$ be pairwise disjoint
connected sets compactly contained in $\operatorname{int}(E_*)$, and assume
that removing each $\widehat C_j$ does not disconnect the component of $E_*$
that contains it. Then
\begin{equation*}
\widehat D
=
E_*\setminus\bigcup_{j=1}^r\widehat C_j
\end{equation*}
satisfies
\begin{equation*}
\beta_0(\widehat D)=m,
\qquad
\beta_1(\widehat D)=r.
\end{equation*}
\end{proposition}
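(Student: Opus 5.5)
The proof will work one component of $E_*$ at a time and use Proposition~\ref{prop:alexander} to turn the count of holes into a count of bounded complementary components. Write $E_*=E^1\sqcup\cdots\sqcup E^m$ for the decomposition into connected components. Since each $\widehat C_j$ is connected and compactly contained in $\operatorname{int}(E_*)$, it lies in the interior of exactly one $E^i$. Hence
\begin{equation*}
\widehat D=\bigsqcup_{i=1}^m\Bigl(E^i\setminus\bigcup_{\widehat C_j\subset E^i}\widehat C_j\Bigr),
\end{equation*}
and both Betti numbers are additive over this disjoint union. It is therefore enough to treat a single component $E=E^i$ containing cavities $\widehat C_1,\dots,\widehat C_s$, and to show that $E\setminus\bigcup_j\widehat C_j$ is connected with $\beta_1=s$.

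\textbf{Connectedness.} The hypothesis only says that removing one $\widehat C_j$ leaves $E$ connected, so removing several needs an argument. I would strengthen the picture as follows. Because $\widehat C_j$ is compact in $\operatorname{int}(E)$, it has a small closed neighbourhood $U_j\subset\operatorname{int}(E)$, and these neighbourhoods can be chosen pairwise disjoint. The frontier annulus $U_j\setminus\widehat C_j$ is connected: this is what "removing $\widehat C_j$ does not disconnect" should give locally, and I would either derive it or state it as the intended reading.

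With that in hand, any point of $E\setminus\bigcup_j\widehat C_j$ can be joined, by a path in $E\setminus\widehat C_1$, to a point outside $U_1$. A path that enters some $U_j$ can be rerouted around $\widehat C_j$ inside the connected set $U_j\setminus\widehat C_j$. Doing this for each $j$ gives $\beta_0=1$.

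\textbf{Holes.} Apply Proposition~\ref{prop:alexander} to the compact set $\widehat D\cap E$. Its complement in $\R^2$ is
\begin{equation*}
(\R^2\setminus E)\cup\widehat C_1\cup\cdots\cup\widehat C_s .
\end{equation*}
By hypothesis, $\R^2\setminus E_*$ has no bounded components. Since the other $E^{i'}$ lie in the same unbounded region, $\R^2\setminus E$ is connected and unbounded. Each $\widehat C_j$ is connected, and it is separated from $\R^2\setminus E$ and from the other cavities by the remaining material $\widehat D\cap E$. Hence the bounded complementary components are exactly the sets $\widehat C_j$, giving $\beta_1=s$. Summing over the components of $E_*$ yields $\beta_0(\widehat D)=m$ and $\beta_1(\widehat D)=r$.

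\textbf{Main obstacle.} The hardest part is regularity. First, Proposition~\ref{prop:alexander} needs $\widehat D$ to be compact and locally contractible. This requires the $\widehat C_j$ to be open, or relatively open in $E$, with reasonable boundary, and the non-disconnection hypothesis has to be read accordingly. In the algorithm they are unions of grid cells, so I would argue in that polygonal setting, where all sets are finite cell complexes.

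Second, on a cell complex the rerouting step can be replaced by a Mayer--Vietoris count. Gluing back $\widehat C_j$, which has $\beta_0=1$, along its connected collar shows that each removal raises $\beta_1$ by exactly one and leaves $\beta_0$ unchanged. This gives the same conclusion without a separate path-rerouting argument.
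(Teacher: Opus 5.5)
Your outline is the same as the paper's: the removals keep each component of $E_*$ connected, and Proposition~\ref{prop:alexander} then counts one unbounded and $r$ bounded complementary components. The gap is in the $\beta_0$ step. You choose pairwise disjoint neighbourhoods $U_j$, but this needs the closures $\overline{\widehat C_j}$ to be pairwise disjoint, and ``pairwise disjoint and compactly contained'' does not give that. In addition, the connectedness of $U_j\setminus\widehat C_j$ is only postulated, and that is exactly where the global single-removal hypothesis would have to become local.

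This is not cosmetic. Take $E_*=\{|x|\le 10\}$, $\widehat C_1=\{1<|x|<2,\ x_2>0\}$ and $\widehat C_2=\{1\le|x|\le 2,\ x_2\le 0\}$. These sets are disjoint, connected and compactly contained in $\operatorname{int}(E_*)$, and neither disconnects $E_*$ on its own. Yet $\widehat D$ is the union of the separated sets $\{|x|<1\}\cup\{|x|=1,\,x_2>0\}$ and $\{2<|x|\le 10\}\cup\{|x|=2,\,x_2>0\}$. Hence $(\beta_0,\beta_1)(\widehat D)=(2,1)$, not $(1,2)$.

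So an extra assumption is needed, such as all $\widehat C_j$ open or their closures pairwise disjoint. It is needed by your rerouting and Mayer--Vietoris steps, by your claim that each cavity is ``separated by the remaining material'', and equally by the paper's two-line proof, which makes the same tacit assumption. Your write-up also pulls in two directions. You treat the $\widehat C_j$ as compact to get disjoint collars, but you want them open so that $\widehat D$ is compact for Proposition~\ref{prop:alexander}. For open cavities the closures can still touch, for example two tangent open disks. Then the collars need not exist, even though the conclusion holds.

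A clean repair is to assume the $\widehat C_j$ are open, as for interiors of pixel unions and with the regularity you already flagged. Then replace the collar argument by Alexander duality in degree $0$ as well as degree $1$.

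For $\beta_1$: $\widehat D$ is compact, and $S^2\setminus\widehat D$ is the disjoint union of the open connected sets $S^2\setminus E_*$ and $\widehat C_1,\dots,\widehat C_r$. It therefore has exactly $1+r$ components, so $\beta_1(\widehat D)=r$.

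For $\beta_0$: since $\operatorname{rank}H^1(S^2\setminus E_*;\Ftwo)=\beta_0(E_*)-1$, duality gives $\beta_0(\widehat D)-1=\operatorname{rank}H^1(S^2\setminus\widehat D;\Ftwo)=(m-1)+\sum_j\operatorname{rank}H^1(\widehat C_j;\Ftwo)$. Now apply the same duality to $E^i\setminus\widehat C_j$. Its complement is $(S^2\setminus E^i)\sqcup\widehat C_j$, and $H^1(S^2\setminus E^i;\Ftwo)=0$. Hence the single-removal hypothesis is equivalent to $H^1(\widehat C_j;\Ftwo)=0$, which gives $\beta_0(\widehat D)=m$. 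This route uses the hypothesis exactly as stated, needs no local collar lemma, and is unaffected when cavity closures touch.
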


\begin{proof}
The removals leave the $m$ exterior components connected, so
$\beta_0(\widehat D)=m$. The complement of $\widehat D$ has one unbounded
component and exactly $r$ bounded components created by the removed cavity
sets. Proposition~\ref{prop:alexander} then gives
$\beta_1(\widehat D)=r$.
\end{proof}

The preceding results separate algorithmic consistency from physical
resolution. Proposition~\ref{prop:finaltopology} describes when the binary
set constructed from the detected envelope and cavity masks realizes the
corresponding component and cavity counts. Equality with the physical Betti
vector requires, in addition, that the physical structures survive the
resolution scale of the indicator. The offset and persistence arguments above
make this second requirement explicit in the ideal model.

\section{Numerical experiments}
\label{sec:numerics}

This section evaluates the full reconstruction pipeline on full-wave data and
then examines its resolution limits by varying the relevant geometric scales.
The physical topology is used only for evaluation; it is not supplied to the
algorithm.

\subsection{Computational setting}

The forward data are generated from the Lippmann--Schwinger equation
\eqref{eq:LS} on a uniform Cartesian grid. The volume integral is approximated
by cellwise quadrature, and the diagonal Green-kernel value is replaced by its
disk-averaged cell value. Thus all reported data are generated by the full
Lippmann--Schwinger model rather than by the Born approximation.

Unless stated otherwise, we use $k=6$, $q_0=0.5$, $M=80$ uniformly distributed
incident and observation directions, a $60\times60$ forward grid, and a
$180\times180$ reconstruction grid. In \eqref{eq:FM}, the regularization is
\begin{equation*}
\varepsilon_{\rm FM}=10^{-7}\lambda_{\max}(F_\#),
\end{equation*}
and the superlevel scan uses $400$ uniformly spaced thresholds in
$[0.001,0.999]$.

For the digital topology, foreground superlevel sets use eight-neighbor
connectivity, while the complementary background and the sublevel $H_0$
filtration use four-neighbor connectivity. This complementary convention
avoids the usual foreground/background ambiguity in two-dimensional digital
images and is consistent with the dual viewpoint in \cite{Bleile2022}. Finite
$H_0$ bars are computed by an elder-rule union--find algorithm.

The representative figures use $5\%$ noise according to \eqref{eq:noise}, and
we take $\tau=10^{-4}$ unless stated otherwise. Reconstruction accuracy is
measured by the intersection-over-union
\begin{equation*}
\operatorname{IoU}(A,B)
=
\frac{|A\cap B|}{|A\cup B|},
\end{equation*}
where $|\cdot|$ denotes area, approximated by pixel count on the reconstruction
grid.

\subsection{Representative reconstructions}

The first configuration consists of one disk and one annulus, with physical
Betti vector $\bm\beta(D)=(2,1)$. At $5\%$ noise, the fixed threshold
$I\ge0.5$ gives $(3,0)$ with IoU $0.491$. The proposed superlevel stage
selects a two-component exterior state, and the sublevel filtration contains
one significant interior bar with persistence approximately $0.192$. The
resulting detectable topology is $(2,1)$ and the final IoU is $0.776$.

\begin{figure}[t]
\centering
\includegraphics[width=0.96\textwidth]{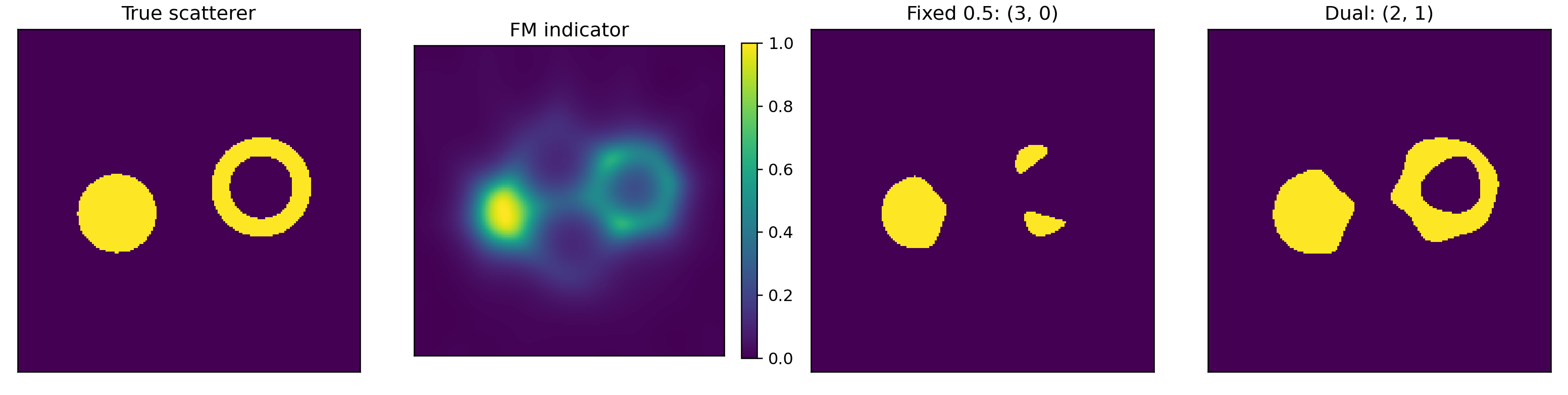}
\caption{Disk--annulus example at $5\%$ noise. From left to right: physical scatterer, normalized factorization indicator, reconstruction using the fixed threshold $0.5$, and the dual-filtration reconstruction.}
\label{fig:diskannulus}
\end{figure}

The second group contains two multi-hole configurations. The connected
double-window frame has physical topology $(1,2)$. Here the fixed threshold
already gives the correct topology, with IoU $0.665$. The dual-filtration
procedure also gives $(1,2)$; the two detected cavity bars have persistences
approximately $0.454$ and $0.453$, and the final IoU is $0.666$. This example
shows that the proposed procedure is not predicated on the failure of a
particular fixed threshold.

The more demanding configuration consists of two annuli and one disk and has
physical topology $(3,2)$. For this test, we use $k=2$, $q_0=0.5$, $M=96$
incident and observation directions, a $100\times100$ forward grid, and a
$220\times220$ reconstruction grid. The annuli have inner-to-outer radius
ratio $0.636$, and the minimum separation between distinct components is $5.0$
in the chosen units. At $5\%$ noise, the fixed threshold $0.5$ gives $(1,0)$
with IoU $0.288$. The superlevel stage identifies three exterior components,
while the global sublevel filtration contains two persistent components born
inside the filled envelope, with persistences approximately $0.0139$ and
$0.0133$. The combined reconstruction gives $(3,2)$ and IoU $0.692$.

\begin{figure}[t]
\centering
\includegraphics[width=0.96\textwidth]{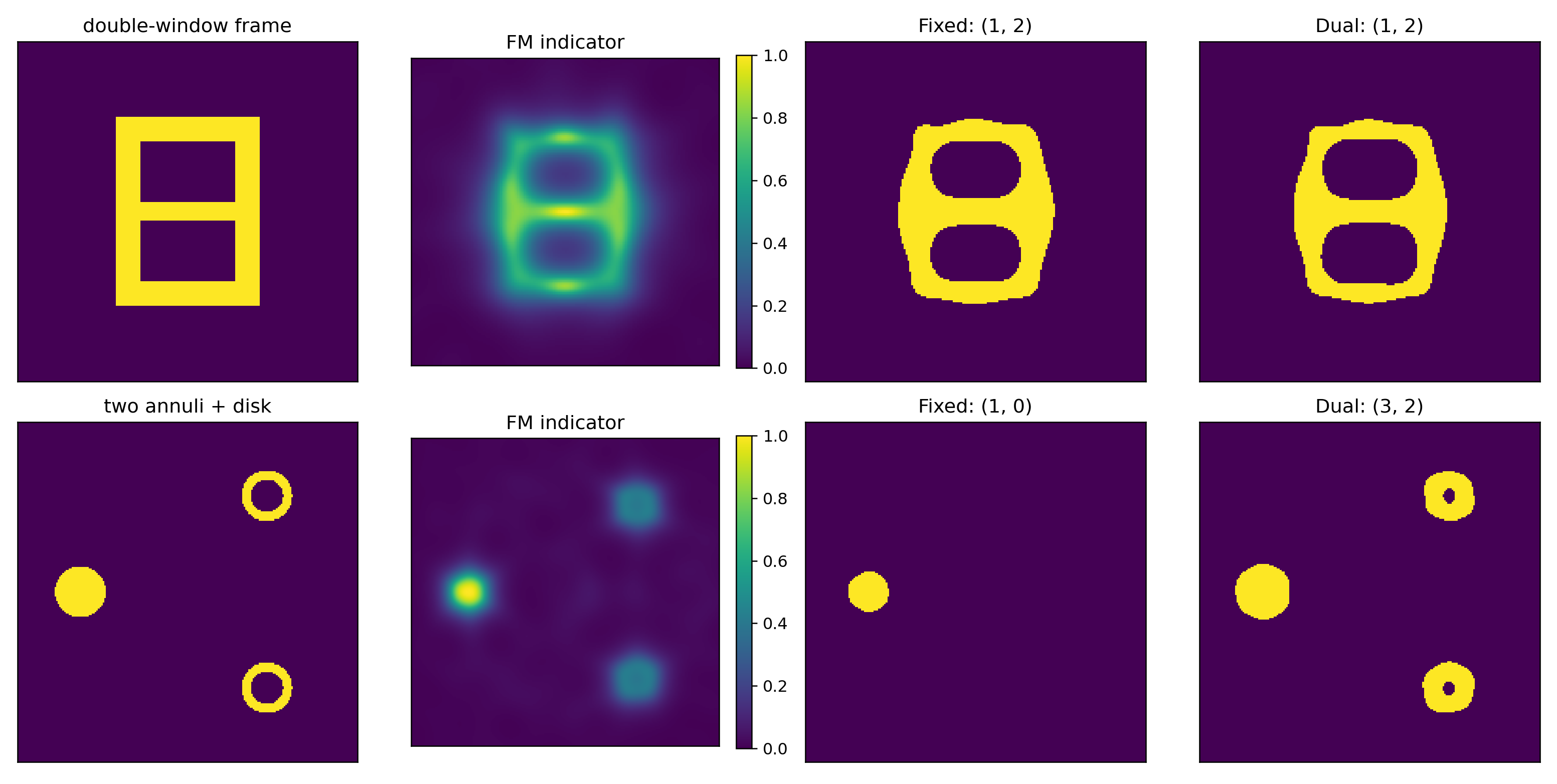}
\caption{Multiple-cavity examples at $5\%$ noise. Top: connected double-window frame. Bottom: two annuli and one disk. The latter illustrates a case in which the exterior components and cavities are most naturally identified at different intensity levels.}
\label{fig:multihole}
\end{figure}

For the two-annuli-plus-disk configuration, we also tested one noiseless
realization and six independent realizations at each of $2\%$, $5\%$, and
$10\%$ noise. For all $19$ realizations, the detected topology remained
$(3,2)$ for each of
\begin{equation*}
\tau=10^{-5},\qquad 10^{-4},\qquad 10^{-3}.
\end{equation*}
Thus all $57$ noise--cutoff combinations produced the same detectable Betti
vector. At $10\%$ noise, the smallest second-cavity persistence among these
tests was approximately $1.13\times10^{-3}$.

Table~\ref{tab:summary} summarizes the representative $5\%$-noise results.

\begin{table}[t]
\centering
\caption{Representative reconstruction results at $5\%$ noise.}
\label{tab:summary}
\small
\begin{tabular}{lccccc}
\toprule
Geometry
& $\bm\beta(D)$
& Fixed $\bm\beta$
& $\widehat{\bm\beta}_{\rm det}$
& Fixed IoU
& Dual IoU
\\
\midrule
Disk--annulus
& $(2,1)$ & $(3,0)$ & $(2,1)$ & $0.491$ & $0.776$
\\
Double-window frame
& $(1,2)$ & $(1,2)$ & $(1,2)$ & $0.665$ & $0.666$
\\
Two annuli + disk
& $(3,2)$ & $(1,0)$ & $(3,2)$ & $0.288$ & $0.692$
\\
Small-hole annulus
& $(1,1)$ & $(1,0)$ & $(1,0)$ & $0.899$ & $0.904$
\\
Close disks
& $(2,0)$ & $(1,0)$ & $(1,0)$ & $0.630$ & $0.649$
\\
\bottomrule
\end{tabular}
\end{table}

\subsection{Resolution limits and parameter sweeps}

The small-hole annulus and the close-disk configuration illustrate the two
resolution mechanisms described by the ideal model. In the small-hole case,
the physical topology is $(1,1)$, but the factorization indicator contains no
significant interior sublevel bar inside the selected envelope, so the method
reports detectable topology $(1,0)$. In the close-disk case, the physical
topology is $(2,0)$, but the dominant bright region is connected, and the
method again reports $(1,0)$. These outputs are interpreted as resolution
limits of the available indicator rather than as attempts to enforce a known
physical Betti vector.

\begin{figure}[t]
\centering
\includegraphics[width=0.96\textwidth]{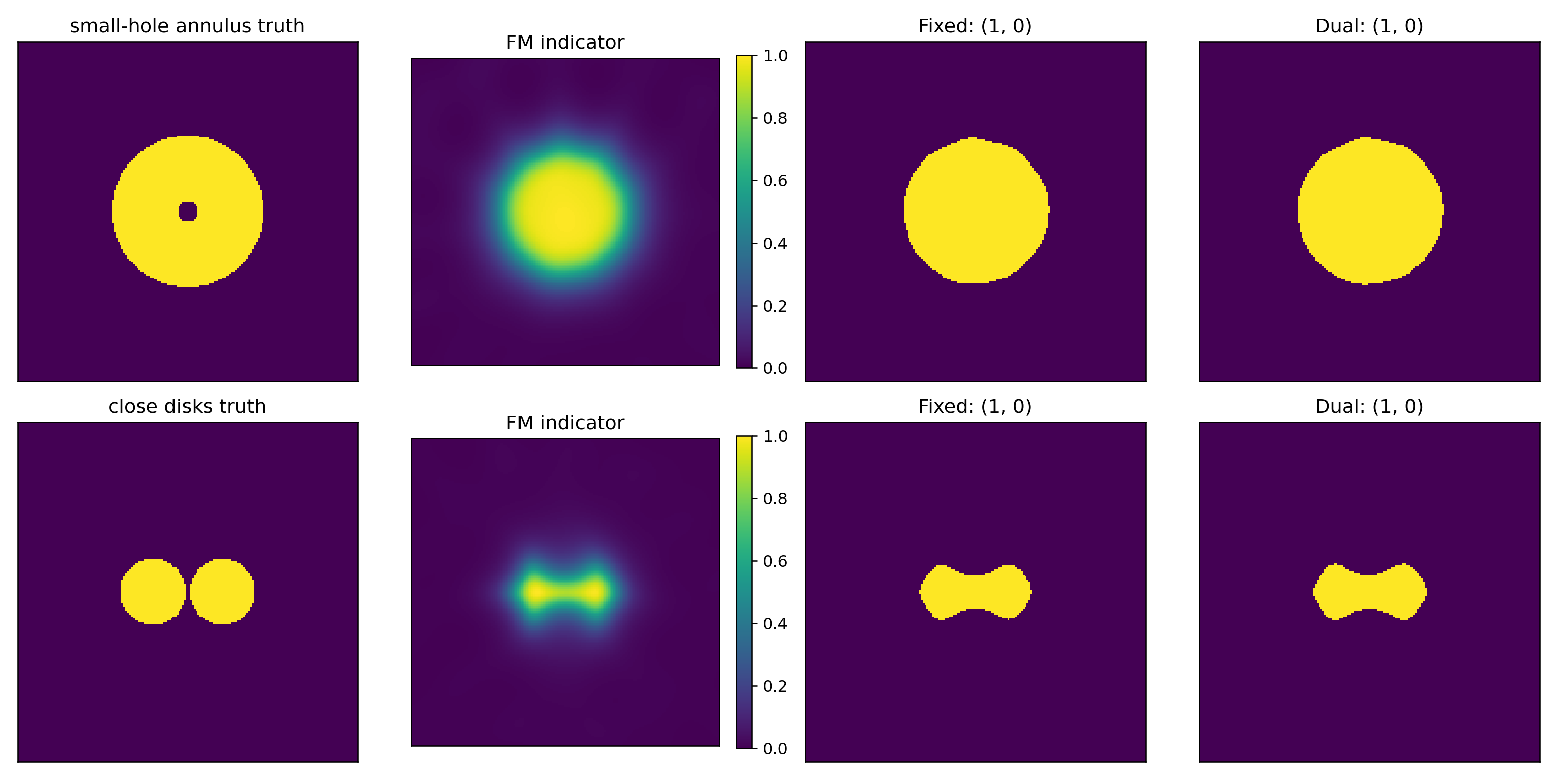}
\caption{Resolution-limit examples at $5\%$ noise. Top: the small physical cavity is not represented by a significant interior persistence bar. Bottom: two nearby disks are merged in the factorization indicator.}
\label{fig:limits}
\end{figure}

To determine whether these are isolated examples or part of systematic
transitions, we vary the relevant geometric scales. For two equal disks of
radius $0.25$, the boundary gap is sampled at
\begin{equation*}
g=0.02,\ 0.04,\ 0.07,\ 0.10,\ 0.14,\ 0.20,\ 0.30,\ 0.45.
\end{equation*}
For both noiseless data and $5\%$ noise, the detected component count is
$\widehat\beta_0=1$ for $g\le0.20$ and becomes $2$ at $g=0.30$ and $0.45$.

For an annulus with outer radius $0.58$, the inner radius is sampled at
\begin{equation*}
a=0.035,\ 0.055,\ 0.075,\ 0.095,\ 0.120,\ 0.160,\ 0.210,\ 0.270.
\end{equation*}
No cavity is detected for $a\le0.095$. At $a=0.120$, the first cavity bar
appears, with persistence approximately $9.86\times10^{-3}$ for noiseless
data and $2.09\times10^{-3}$ at $5\%$ noise. The persistence then increases
with the hole size; at $a=0.270$, it is approximately $0.375$ and $0.387$ for
noiseless and $5\%$ noisy data, respectively.

\begin{figure}[t]
\centering
\includegraphics[width=0.93\textwidth]{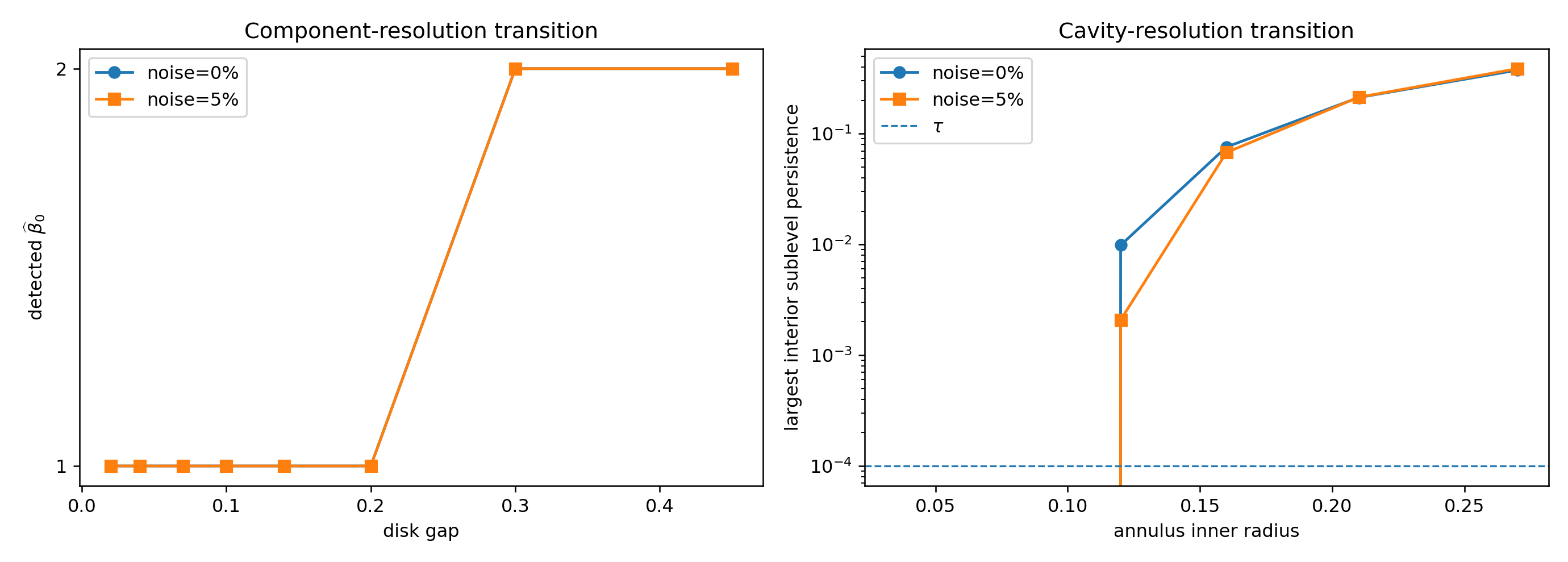}
\caption{Topological resolution sweeps. Left: detected component count as the gap between two disks increases. Right: largest interior sublevel persistence as the annular hole radius increases. The dashed line denotes the cutoff $\tau=10^{-4}$.}
\label{fig:phasediagram}
\end{figure}

The transition locations in Figure~\ref{fig:phasediagram} should not be
identified numerically with the reach values of Section~\ref{sec:analysis},
because the factorization indicator is not the distance-profile model.
Nevertheless, the same qualitative mechanism is visible: increasing the
separation eventually prevents superlevel merging, while increasing the cavity
size produces a longer and more robust interior sublevel bar.

These experiments also clarify the role of persistent homology relative to
fixed-threshold homology. A suitable fixed threshold can recover the correct
topology, as in the double-window frame. The purpose of the filtration is not
to improve the homology computation at a prescribed level, but to examine the
available topological evidence over a range of intensity scales, estimate the
detectable Betti vector without prescribing it, and allow the exterior and
cavity structures to be reconstructed at different levels. The resolution
sweeps also show that persistent postprocessing does not recover features that
are absent from the underlying indicator.

\section{Conclusion}
\label{sec:conclusion}

We have developed a dual-filtration reconstruction for qualitative acoustic
inverse scattering. Superlevel sets determine the exterior component
structure, while finite sublevel $H_0$ persistence bars are filtered by their
birth locations relative to a filled exterior envelope to determine the
cavities. The true Betti vector is not an input, and different topological
features can be localized at different intensity levels. Planar duality and
the coarea formula provide the corresponding topological and energetic
interpretations of the construction.

An ideal distance-profile model makes the principal resolution limits
explicit: positive reach controls topology preservation of geometric offsets,
while the two-disk and annular examples describe component merging and cavity
filling. For an annulus, cavity persistence increases with the hole radius,
and persistence stability gives a perturbation-dependent sufficient condition
for retaining the cavity bar. The full-wave parameter sweeps exhibit the same
qualitative transitions for factorization indicators. The method should
therefore be interpreted as estimating the topology detectable from the
available indicator, rather than forcing agreement with physical features
that are not resolved.

\end{document}